\newtheorem{theorem}{Theorem}
\newtheorem{lemma}{Lemma}
\DeclareMathOperator{\tr}{tr}
\DeclareMathOperator{\diag}{diag}
\newcommand{\lng}{\langle}
\newcommand{\rng}{\rangle}
\newcommand{\lf}{\left}
\newcommand{\rg}{\right}
\newcommand{\R}{\mathbb R}
\newcommand{\an}{\mathbb A^n}
\newcommand{\snp}{\mathbb S^n_+}
\newcommand{\nn}{\mathbb R^{n\times n}}
\newcommand{\tp}{^\top}
\newenvironment{proof}{{\noindent\bf Proof.}}{\hfill$\Box$\\}
\begin{document}

\title{The dual Z-property for the Lorentz cone\thanks{{\it 2010 AMS Subject Classification:} Primary 90C33,
Secondary 15A48; {\it Key words and phrases:} convex sublattices, isotone projections. }}
\author{S. Z. N\'emeth\\School of Mathematics, The University of Birmingham\\The Watson Building, Edgbaston\\Birmingham B15 2TT, United
Kingdom\\email: s.nemeth@bham.ac.uk}
\date{}
\maketitle

\begin{abstract}
	The Z-property of a linear map with respect to a cone is an extension of the notion of Z-matrices. In a recent paper of
	Orlitzky (see Corollary 6.2 in M. Orlitzky. Positive and {$\mathbf{Z}$}-operators on closed convex cones,
	\textit{Electron. J Linear Algebra}, 444--458, 2018) the characterisation of cone-complementarity is given in terms of the dual of the 
	cone of linear maps satisfying the Z-property. Therefore, it is meaningful to consider the
	problem of finding the dual cone of the cone of linear maps which 
	have the Z-property with respect to a cone. This short note will solve this problem in the particular case when the Z-property is
	considered with respect to the Lorentz cone.
\end{abstract}

\section{Introduction} 

In the followings we will identify a linear map $A:\,\mathbb R^n\to\mathbb R^n$ with its matrix in $\nn$ with respect
to the canonical basis. In $\R^N$ consider the standard inner product \[\lng
x,y\rng=x\tp y=x_1y_1+x_2y_2+\dots+x_Ny_N,\] for any two column vectors $x=(x_1,\dots,x_N)\tp$, $y=(y_1,\dots,y_N)\tp$. In the case of 
$N=n\times n$ this can be expressed by the standard trace formula $\lng
A,B\rng=\tr{A\tp B}$, for any $A,B\in\R^{n\times n}$. All following duals will be considered with respect to the
standard inner product. 

Recall that a closed set $K\subseteq\R^N$ is called a closed convex cone if and only $\lambda u\in K$ and $u+v\in K$ for 
any $u,v\in K$ and any $\lambda>0$. We remind that the dual of $K$ is the closed convex cone $K^*$ defined by 
\[K^*=\{z\in\R^N:\,\lng z,v\rng\ge 0,\textrm{ }\forall v\in K\}.\]

Denote $C(K)=\{(x,y)\in\R^n\times\R^n:\,x\in K,\textrm{ }y\in K^*,\textrm{ and }\lng x,y\rng=0\}$.
One says that a linear map $A:\,\mathbb R^n\to\mathbb R^n$ has the Z-property with respect to $K$ (or shortly $K$-Z-property) if 
$(x,y)\in C(K)$ implies $\lng Ax,y\rng\le 0$. This notion has been introduced by Gowda and Tao in \cite{GowdaTao2009}. It is easy to check that a
matrix $A$ has the Z-property with respect to the nonnegative orthant 
$\R^n_+$ if an only if it is a Z-matrix, that is, a matrix whose off-diagonal elements are nonpositive
(\cite{FiedlerPtak1962},\cite[Definition 2.5.1]{HornJohnson1991}). 

Z-matrices are important in differential equations, dynamical systems, 
optimization, economics etc. \cite{BermanPlemons1994}. The importance of Z-matrices in linear complementarity is clear from the equivalences 
(1)-(6) in the Introduction of \cite{GowdaTao2009} which come from Chapter 6 of \cite{BermanPlemons1994}. 

The following papers justify the importance of the Z-property in finite
dimensions: \cite{GowdaTao2009,Gowda2012,GowdaTaoRavindran2012,TaoGowda2013,KongTaoLuoXiu2015,Orlitzky2017,Orlitzky2018}.
Theorems 6, 7, 9, 10 and 13 in \cite{GowdaTao2009} show the crucial role of Z-property in linear cone-complementarity, extending the 
fundamental role of Z-matrices in linear complementarity. 

We also mention that important results have been obtained for the Z-property in infinite dimensions too \cite{FanTaoRavindran2017}. 

Denote by $Z(K)$ the cone of linear mappings with $K$-Z-property. 
The dual of $Z(K)$ appears first time in the paper \cite{Orlitzky2018} of Orlitzky.
From Corollary 6.2 of this paper it follows that $(x,y)\in C(K)$ if and only if
$-yx\tp\in Z(K)^*$, which can also be easily checked by using the properties of the trace function for matrices.  
Although simple, this equivalence makes the study of $Z(K)^*$ an interesting topic in cone-complementarity. In the case of the
nonnegative orthant $\R^n_+$ it is a straightforward exercise to check that the dual of $Z(\mathbb R^n_+)$ is
formed of nonpositive matrices with zero diagonal. In general finding $Z(K)^*$ is a highly nontrivial
task. This short note will determine $Z(L)^*$ for the Lorentz cone L.

\section{The main result}
Let $J=\diag(1,-1,-1,\dots,-1)\in\nn$, $L\subset\R^n$ be the Lorentz cone defined by
\[L=\{(x_1,x_2,\dots,x_n)\tp\in\R^n:\,x_1\ge\sqrt{x_2^2+\dots+x_n^2}\}\]
and $Z(L)$ the cone of linear maps $A:\,\mathbb R^n\to\mathbb R^n$ with $L$-Z-property (that is, Z-property with respect to $L$). 

Denote by $\mathbb S^n$, $\mathbb S^n_+$ and $\mathbb A^n$ the linear subspace of symmetric 
matrices of $\nn$, the cone of $n\times n$ symmetric positive semidefinite matrices and the linear subspace of skew-symmetric matrices of 
$\nn$, respectively.

Then, we have the following lemma:
\begin{lemma}\label{lz}
	\begin{equation*}
		Z(L)=\{\gamma I-J(P+Q):\,\gamma\in\R,\textrm{ }P\in\snp\textrm{ and } Q\in\an\}.
	\end{equation*}
\end{lemma}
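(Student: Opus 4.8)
The plan is to reduce the $L$-Z-property to a single quadratic-form inequality on the light cone and then invoke the S-lemma with an equality constraint. First I would describe the complementarity set $C(L)$ explicitly. Since the Lorentz cone is self-dual, $L^*=L$, and $\bdr L=\{x:\,x_1\ge0,\ x\tp Jx=0\}$. If $x\in\inte L$ then the only $y\in L$ with $\lng x,y\rng=0$ is $y=0$, and $x=0$ imposes no condition, so the only nontrivial pairs arise from $x\in\bdr L\setminus\{0\}$. For such $x$, writing $x=(x_1,\bar x)\tp$ with $x_1=|\bar x|$, the Cauchy--Schwarz inequality forces every complementary $y$ to be antiparallel in its spatial part, giving $y=tJx$ with $t\ge0$ (note $Jx\in\bdr L$ and $\lng x,Jx\rng=x\tp Jx=0$). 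Hence $A\in Z(L)$ if and only if $\lng Ax,Jx\rng=x\tp A\tp Jx\le0$ for every $x\in\bdr L$.

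Next I would symmetrise. Setting $S=\tfrac12(JA+A\tp J)\in\sn$ we have $x\tp A\tp Jx=x\tp Sx$, and since this form is even in $x$, the condition on the upper nappe $\bdr L$ is equivalent to the condition on the whole light cone: $x\tp Sx\le0$ whenever $x\tp Jx=0$. Because $J=\diag(1,-1,\dots,-1)$ is indefinite, the homogeneous S-lemma with equality applies and yields the clean characterisation that this holds if and only if there exists $\mu\in\R$ with $\mu J-S\in\snp$.

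It then remains to translate $\mu J-S\in\snp$ into the stated parametrisation. I would set $\gamma=\mu$, $P=\gamma J-S\in\snp$, and let $Q=\tfrac12(A\tp J-JA)\in\an$ be the skew-symmetric part of $M:=\gamma J-JA$; a direct computation shows the symmetric part of $M$ equals $\gamma J-S=P$, so $M=P+Q$ and, using $J^2=I$ and $J\tp=J$, $A=\gamma I-JM=\gamma I-J(P+Q)$. Conversely, for $A=\gamma I-J(P+Q)$ with $P\in\snp$ and $Q\in\an$ one computes $S=\gamma J-P$, whence $\gamma J-S=P\succeq0$ recovers the inequality. The main obstacle is the nontrivial direction of the equality-constrained S-lemma, which produces the multiplier $\mu$ from the implication; the reductions to the light cone and the final matrix algebra are routine, the latter relying only on $J^2=I$, $J\tp=J$ and the symmetric/skew decomposition.
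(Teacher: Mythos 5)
Your proof is correct, but it takes a genuinely different route to the key intermediate fact. The paper simply cites \cite[Example 4]{GowdaTao2009} for the characterisation \eqref{egt}, namely $Z(L)=\{A:\,\exists\gamma\in\R,\ \gamma J-(JA+A\tp J)\in\snp\}$, and then its entire proof consists of the algebraic translation between that description and the parametrisation $\gamma I-J(P+Q)$. You instead re-derive \eqref{egt} from first principles: you work out $C(L)$ explicitly (the nontrivial pairs being $(x,tJx)$ with $x\in\bdr L$, $t\ge0$, via self-duality and the equality case of Cauchy--Schwarz), reduce the $L$-Z-property to $x\tp Sx\le0$ on the light cone $\{x:\,x\tp Jx=0\}$ with $S=\tfrac12(JA+A\tp J)$, and then invoke the homogeneous S-lemma with an equality constraint (Finsler-type theorem) to produce the multiplier $\mu$ with $\mu J-S\in\snp$. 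All of these steps check out, and your final symmetric/skew decomposition of $\gamma J-JA$ is essentially identical to the paper's computation. What your route buys is self-containedness and transparency --- it explains \emph{why} the Gowda--Tao description holds, rather than treating it as a black box. The cost is that the burden shifts onto the equality-constrained S-lemma, which is itself a nontrivial classical result you would need to state precisely and reference (or prove), and whose indefiniteness hypothesis on $J$ holds only for $n\ge2$ (the case $n=1$ is degenerate but trivially consistent with the lemma). The paper's citation-based route is shorter and avoids importing that machinery.
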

\begin{proof}
	According to Gowda and Tao \cite[Example 4]{GowdaTao2009} we have 
	\begin{equation}\label{egt}
		Z(L)=\{A\in\nn:\,\exists\gamma\in\R\textrm{ such that }\gamma 
		J-(JA+A\tp J)\in\snp\}.
	\end{equation}
	Denote 
	\begin{equation*}
		\Gamma=\{\gamma I-J(P+Q):\,\gamma\in\R,\textrm{ }P\in\snp\textrm{ and } Q\in\an\}.
	\end{equation*}
	We need to show that $Z(L)=\Gamma$. 
	
	Let $A=\gamma I-J(P+Q)\in\Gamma$, where $\gamma\in\R$, $P\in\snp$ and
	$Q\in\an$. Then, $J(P+Q)=\gamma I-A$. Multiplying this equation by $J$ from the left,
	and using $J^2=I$, we obtain $P+Q=\gamma J-JA$. Hence, $Q=\gamma J-JA-P$ and thereforee
	$0=Q+Q\tp=2\gamma J-(JA+A\tp J)-P$. Thus, $2\gamma J-(JA+A\tp J)\in\snp$, which
	according to \eqref{egt} implies that $A\in Z(L)$. This yields $\Gamma\subseteq Z(L)$. 

	Conversely, let that $A\in Z(L)$. Then, according to \eqref{egt}, we get $2\gamma J-(JA+A\tp J)=2P$, for some
	$\gamma\in\R$ and some positive semidefinite matrix $P\in\nn$. Hence, $JA+(JA)\tp=2(\gamma
	J-P)$, which implies $JA=\gamma J-P-Q$, for some skew-symmetric matrix $Q\in\nn$.
	Multiplying the last equality by $J$ from the left and using $J^2=I$, we obtain that $A=\gamma I-J(P+Q)$, which
	implies $A\in\Gamma$. This yields $Z(L)\subseteq\Gamma$.

	In conclusion, $Z(L)=\Gamma$.
\end{proof}

In the next theorem we find the dual of $Z(L)$.
\begin{theorem}
	\[Z(L)^*=\{B\in\nn:\,\tr{B}=0\textrm{ and }JB\textrm{ is negative semidefinite}\}.\] 
\end{theorem}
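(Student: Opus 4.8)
The plan is to compute $Z(L)^*$ directly from the parametrisation $Z(L)=\Gamma$ furnished by Lemma~\ref{lz}, exploiting the fact that $\Gamma$ is the image under the linear substitution $(\gamma,P,Q)\mapsto\gamma I-J(P+Q)$ of the product of the whole line $\R$, the cone $\snp$, and the whole subspace $\an$. By definition of the dual and of the trace inner product, $B\in Z(L)^*$ if and only if $\lng A,B\rng=\tr(A\tp B)\ge 0$ for every $A=\gamma I-J(P+Q)\in Z(L)$. The first step is to expand this quantity. Using $J\tp=J$, $P\tp=P$ and $Q\tp=-Q$ one gets $A\tp=\gamma I-(P-Q)J$, and hence
\[
\tr(A\tp B)=\gamma\tr(B)-\tr((P-Q)JB)=\gamma\tr(B)-\tr(PJB)+\tr(QJB).
\]

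The second step is to decouple the three parameters according to the type of set each ranges over. Since $\gamma$ runs over all of $\R$, the coefficient of $\gamma$ must vanish, forcing $\tr(B)=0$; otherwise the inner product is unbounded below. Likewise, because $Q$ runs over the full subspace $\an$ (so that both $Q$ and $-Q$ are admissible), the linear term $\tr(QJB)$ must vanish identically, i.e.\ $\tr(QJB)=0$ for all $Q\in\an$. Invoking the orthogonality of $\sn$ and $\an$ under the trace inner product — equivalently, the elementary fact that a matrix $M$ satisfies $\tr(QM)=0$ for every skew-symmetric $Q$ precisely when $M$ is symmetric — this is exactly the requirement $JB\in\sn$.

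The third step handles the remaining parameter $P$, which runs only over the \emph{cone} $\snp$ and therefore yields an inequality rather than an equation. Under the two conditions already obtained, nonnegativity of $\tr(A\tp B)$ reduces to $-\tr(PJB)\ge 0$, that is $\tr(PJB)\le 0$ for all $P\in\snp$. Here I would invoke the well-known self-duality of $\snp$ inside $\sn$: for symmetric $M$, one has $\tr(PM)\ge 0$ for all $P\in\snp$ if and only if $M\in\snp$. Applying this to $M=JB$ (symmetric, by the second step) gives $-JB\in\snp$, i.e.\ $JB$ is negative semidefinite. Since every step is an equivalence, reading the chain forwards and backwards establishes both inclusions simultaneously, so $B\in Z(L)^*$ if and only if $\tr(B)=0$ and $JB$ is negative semidefinite, as claimed.

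The point that requires care is the bookkeeping of transposes in the first step together with the clean separation in the second: the structural observation driving the whole argument is that $\gamma$ and $Q$ sweep out linear subspaces (and so annihilate the corresponding linear terms), whereas $P$ sweeps out only a cone (and so leaves a genuine inequality). I would also remark that negative semidefiniteness of $JB$ already entails its symmetry, so the condition $JB\in\sn$ extracted from the $Q$-term is subsumed in the final statement and need not be listed on its own.
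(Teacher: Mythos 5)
Your proof is correct and follows essentially the same route as the paper: both start from the parametrisation of $Z(L)$ in Lemma~\ref{lz}, use the arbitrariness of $\gamma$ to force $\tr(B)=0$, and reduce the rest to a duality computation for $\snp+\an$. The only difference is presentational — you decouple the $Q$-term (giving symmetry of $JB$ via $(\an)^\perp=\sn$) and the $P$-term (giving semidefiniteness via self-duality of $\snp$ in $\sn$) separately, whereas the paper packages both into the single identity $(\snp+\an)^*=\snp$ using the formulas $(K_1+K_2)^*=K_1^*\cap K_2^*$ and $K^*=K_*+(K-K)^\perp$.
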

\begin{proof} 
	By using Lemma \ref{lz} we have
	\begin{eqnarray*}
		Z(L)^*=\{B\in\nn:\tr(B\tp A)=\lng B,A\rng\ge 0,\textrm{ }\forall A\in Z(L)\}\\
		=\{B\in\nn:\,\tr\lf(B\tp[\gamma I-J(P+Q)]\rg)\ge0,\textrm{ }\forall\gamma\in\R,
		\textrm{ }\forall P\in\snp\textrm{ and }\forall Q\in\an\}\\
		=\{B\in\nn:\,\gamma\tr(B)-\tr\lf(B\tp J(P+Q)\rg)\ge0,\textrm{ }
		\forall\gamma\in\R,\textrm{ }\forall P\in\snp\textrm{ and }\forall Q\in\an\}\\
		=\{B\in\nn:\,\tr(B)=0,\textrm{ }\tr\lf(B\tp J(P+Q)\rg)\le0,
		\textrm{ }\forall P\in\snp\textrm{ and }\forall Q\in\an\}\\
		=\{B\in\nn:\,\tr(B)=0,\textrm{ }\tr\lf((P+Q)B\tp J\rg)\le0,
		\textrm{ }\forall P\in\snp\textrm{ and }\forall Q\in\an\}\\
		=\{B\in\nn:\,\tr(B)=0,\textrm{ }\lng (P+Q)\tp,B\tp J\rng\le0,
		\textrm{ }\forall P\in\snp\textrm{ and }\forall Q\in\an\}\\
		=\{B\in\nn:\,\tr(B)=0,\textrm{ }\lng P+Q,B\tp J\rng\le0,
		\textrm{ }\forall P\in\snp\textrm{ and }\forall Q\in\an\}.
	\end{eqnarray*}

	The last equality in
	the formula of $Z(L)^*$ above shows that $B\in Z(L)^*$ if and only if $\tr(B)=0$ and $-B\tp J\in (\mathbb S^n_++\mathbb A^n)^*$, where the dual is 
	considered as the dual of a cone in $\nn$. 

	In case of $\mathbb S^n_+$ we should make a distinction between the dual of this 
	cone in $\nn$ and the dual in $\mathbb S^n$. In case of the former we will use the notation $(\mathbb S^n_+)^*$ and in the case
	of the latter $(\mathbb S^n_+)_*$. 

	In general for a closed convex cone $K\subseteq\R^N$ (in our case $N=n\times n$) we denote by $K^*$ 
	the dual of $K$ in $\R^N$, by $K_*$ the dual of $K$ in the linear subspace $K-K$ spanned by $K$, and by $\perp$ the orthogonal 
	complement operation for subspaces. 

	It is an easy exercise to check that the dual of a subspace (as a cone) is its orthogonal complement
	and $K^*=K_*+(K-K)^\perp$. It is well known that $\mathbb S^n_+$ is self-dual in $\mathbb S^n$, that is $(\mathbb S^n_+)_*=\mathbb S^n_+$
	and that $(\mathbb S^n)^\perp=\mathbb A^n$. Hence, the membership $-B\tp J\in (\mathbb S^n_++\mathbb
	A)^*$, which together with $\tr(B)=0$ characterizes $B\in Z(L)^*$, is 
	equivalent to 
	\begin{eqnarray*}
		-B\tp J\in(\mathbb S^n_++\mathbb A^n)^*=(\mathbb S^n_+)^*\cap (\mathbb A^n)^*=(\mathbb S^n_+)^*\cap (\mathbb A^n)^\perp
		=\lf[(\mathbb S^n_+)_*+(\mathbb S^n_+
		-\mathbb S^n_+)^\perp\rg]\cap (\mathbb A^n)^\perp\\
		=\lf[\mathbb S^n_++(\mathbb S^n_+-\mathbb S^n_+)^\perp\rg]\cap (\mathbb A^n)^\perp
		=\lf[\mathbb S^n_++(\mathbb S^n)^\perp\rg]\cap (\mathbb A^n)^\perp
		=\lf[\mathbb S^n_++\mathbb A^n\rg]\cap\mathbb S^n=\mathbb S^n_+,
	\end{eqnarray*}
	or to $-JB\in\mathbb S_n^+$, as $-B\tp J=(-B\tp J)\tp=-JB$ and it is an easy exercise to check that for any two closed convex cones 
	$K_1$, $K_2$ we have $(K_1+K_2)^*=K_1^*\cap K_2^*$. In conclusion, 
	\[Z(L)^*=\{B\in\nn:\,\tr{B}=0\textrm{ and }JB\textrm{ is negative semidefinite}\}.\] 
\end{proof}

\bibliographystyle{habbrv}
\bibliography{zstar}

\begin{thebibliography}{10}

\bibitem{BermanPlemons1994}
A.~Berman and R.~J. Plemmons.
\newblock {\em Nonnegative matrices in the mathematical sciences}, volume~9 of
  {\em Classics in Applied Mathematics}.
\newblock Society for Industrial and Applied Mathematics (SIAM), Philadelphia,
  PA, 1994.
\newblock Revised reprint of the 1979 original.

\bibitem{FanTaoRavindran2017}
J.~Fan, J.~Tao, and G.~Ravindran.
\newblock On the structure of the set of {${\bf Z}$}-transformations on proper
  cones.
\newblock {\em Pac. J. Optim.}, 13(2):219--226, 2017.

\bibitem{FiedlerPtak1962}
M.~Fiedler and V.~Pt\'{a}k.
\newblock On matrices with non-positive off-diagonal elements and positive
  principal minors.
\newblock {\em Czechoslovak Math. J.}, 12 (87):382--400, 1962.

\bibitem{Gowda2012}
M.~S. Gowda.
\newblock On copositive and completely positive cones, and {$\bf
  Z$}-transformations.
\newblock {\em Electron. J. Linear Algebra}, 23:198--211, 2012.

\bibitem{GowdaTao2009}
M.~S. Gowda and J.~Tao.
\newblock {$\bold Z$}-transformations on proper and symmetric cones: {$\bold
  Z$}-transformations.
\newblock {\em Math. Program.}, 117(1-2, Ser. B):195--221, 2009.

\bibitem{GowdaTaoRavindran2012}
M.~S. Gowda, J.~Tao, and G.~Ravindran.
\newblock On the {$\bold P$}-property of {$\bold Z$} and {L}yapunov-like
  transformations on {E}uclidean {J}ordan algebras.
\newblock {\em Linear Algebra Appl.}, 436(7):2201--2209, 2012.

\bibitem{HornJohnson1991}
R.~A. Horn and C.~R. Johnson.
\newblock {\em Topics in Matrix Analysis}.
\newblock Cambridge University Press, Cambridge, 1991.

\bibitem{KongTaoLuoXiu2015}
L.~Kong, J.~Tao, Z.~Luo, and N.~Xiu.
\newblock Some new results for {Z}-transformations on the {L}orentz cone.
\newblock {\em Pac. J. Optim.}, 11(4):751--762, 2015.

\bibitem{Orlitzky2018}
M.~Orlitzky.
\newblock Positive and {$\bf Z$}-operators on closed convex cones.
\newblock {\em Electron. J. Linear Algebra}, 34:444--458, 2018.

\bibitem{Orlitzky2017}
M.~J. Orlitzky.
\newblock {\em Positive {O}perators, {Z}-{O}perators, {L}yapunov {R}ank, and
  {L}inear {G}ames on {C}losed {C}onvex {C}ones}.
\newblock ProQuest LLC, Ann Arbor, MI, 2017.
\newblock Thesis (Ph.D.)--University of Maryland, Baltimore County.

\bibitem{TaoGowda2013}
J.~Tao and M.~S. Gowda.
\newblock Minimum eigenvalue inequalities for {$\bold{Z}$}-transformations on
  proper and symmetric cones.
\newblock {\em Linear Algebra Appl.}, 438(8):3476--3489, 2013.

\end{thebibliography}

\end{document}